\newtheorem{theorem}{Theorem}[section]
\newtheorem{cor}{Corollary}[section]
\newtheorem{lemma}{Lemma}[section]
\theoremstyle{definition}
\newtheorem{defn}{Definition}
\theoremstyle{definition}
\theoremstyle{Remark}
\theoremstyle{proposition}
\newtheorem{prop}{Proposition}[section]
\begin{document}
\title[H(\MakeLowercase{b}) Spaces of Unit Ball of $\mathbb{C}^n$]
 {Angular Derivatives and Boundary Values of H(\MakeLowercase{b}) Spaces of Unit Ball of $\mathbb{C}^n$}

\author{S\.{I}bel \c{S}ah\.{I}n }

\address{Mimar Sinan Fine Arts University, Mathematics Department}

\email{sibel.sahin@msgsu.edu.tr}

\keywords{deBranges-Rovnyak spaces, angular derivatives, Clark measures, admissible boundary limits}

\date{\today}

\subjclass[2010]{32A37 (primary); 32A35, 32A40 (secondary)}


\begin{abstract}
In this work we study deBranges-Rovnyak spaces, $H(b)$, on the unit ball of $\mathbb{C}^n$. We give an integral representation of the functions in $H(b)$ through the Clark measure on $S^n$ associated with $b$. A characterization of admissible boundary limits is given in relation with finite angular derivatives. Lastly, we examine the interplay between Clark measures and angular derivatives showing that Clark measure associated with $b$ has an atom at a boundary point if and only if $b$ has finite angular derivative at the same point.
\end{abstract}

\maketitle
\section*{Introduction}

The theory of the Hardy spaces goes back to the beginning of 20th century and by the developments in functional analysis which treats these classes as linear spaces of holomorphic functions this special class became one of the central figures in the interplay between functional analysis and complex analysis. Among these $H^p$ classes the space $H^2(\mathbb{D})$ is of particular interest because $H^2(\mathbb{D})$ is a Hardy-Hilbert space and having a Hilbert space structure puts this space into the intersection of holomorphic function theory, functional analysis and operator theory as well. Later, from Beurling's exquisite solution to invariant subspace problem in $H^2(\mathbb{D})$, the subclasses called 'model spaces' emerged and they are of the form $\Theta H^2$ where $\Theta$ is an inner function in $H^2(\mathbb{D})$. A similar modeling theory was pioneered by L.deBranges and J.Rovnyak and it was the beginning of the theory of $H(b)$ spaces. In their approach the idea was to see $H(b)$ spaces as the complementary spaces just like in the model space case however as Sarason \cite{Sar} and many others pointed out $H(b)$ spaces can also be considered as the range of a specific contraction that contains Toeplitz operators. This approach became a determining point in the theory of deBranges-Rovnyak spaces because it enables the construction of $H(b)$ spaces on different regions other than unit disc and also to construct vector valued versions of these classes.

As it is well known in the 70's holomorphic function theory in several variables took a turn from sheaves etc and became more focused on boundary behaviour, $\overline{\partial}$-problem and smoothing. In his prominent work \cite{Rud}, Rudin considered the Hardy space theory in the setting of unit ball. Following his notation throughout this study, $H^{\infty}(\mathbb{B}^n)$ is the class of bounded holomorphic functions of the unit ball $\mathbb{B}^n\subset\mathbb{C}^n$ and the Hardy-Hilbert space $H^2(\mathbb{B}^n)$ is defined as
$$
H^2(\mathbb{B}^n)=\left\{f\in\mathcal{O}(\mathbb{B}^n):~~\sup_{0<r<1}\int_{S^n}|f(r\xi)|^2d\sigma(\xi)<\infty\right\}
$$
where the inner product on $H^2(\mathbb{B}^n)$ is given as:
$$
\langle f,g\rangle=\int_{S^n}f\overline{g}d\sigma,~~f,g\in H^2(\mathbb{B}^n).
$$

In this work we will consider deBranges-Rovnyak spaces of the unit ball of $\mathbb{C}^n$. The main focus will be on three topics, namely Clark measures, boundary limits of $H(b)$ functions and angular derivatives. First of all, as it was pointed out in \cite{Fri1}, $H(b)$ spaces are flexible in some sense compared to classical model spaces however when it comes to the representation it is a difficulty since the inner product does not have an explicit integral form however following their idea in the disc case we will give an integral representation of $H(b)$ functions of the unit ball through Clark measures. Since deBranges-Rovnyak space $H(b)$ is a subspace of the Hardy-Hilbert space $H^2(\mathbb{B}^n)$ we already know that $H(b)$ functions have radial boundary values (\cite{Rud}) and in this study we will give the full characterization of these boundary limits over admissible approach regions through finite angular derivative of the determining $H^\infty$ function $b$.

In the last part of this study we will consider the relation between Clark measure associated with $b$ and finite angular derivative of $b$ at the boundary $S^n$. The main result in this part will show us that the Clark measure associated with $b$ cannot have point mass at a point in $S^n$ as long as $b$ has infinite angular derivative at that point.

\section{Preliminaries}

Let $b\in H^\infty(\mathbb{B}^n)$, $b$ is a non-constant holomorphic function and $\|b\|_{\infty}\leq 1$. Analogous to the unit disc case let us define deBranges-Rovnyak space $H(b)$ of unit ball as the subspace of the Hilbert space $H^2(\mathbb{B}^n)$, defined by the inner product
\begin{equation}\label{innerproduct}
\|(I-T_{b}T_{\overline{b}})^{1/2}f\|_{b}=\|f\|_{2}~~(f\in H^2(\mathbb{B}^n)\ominus Ker(I-T_{b}T_{\overline{b}})^{1/2})
\end{equation}
Using this identification we say that $f\in H^2(\mathbb{B}^n)$ belongs to $H(b)$ if and only if
$$
\sup_{g\in H^2(\mathbb{B}^n)}(\|f+bg\|^{2}_{2}-\|g\|^{2}_{2})<\infty
$$
and
$$
\|f\|^{2}_{b}=\sup_{g\in H^2(\mathbb{B}^n)}(\|f+bg\|^{2}_{2}-\|g\|^{2}_{2}).
$$

As it can be seen from the definition, deBranges-Rovnyak spaces are more flexible than the classical model spaces however not having an integral representation for the inner product has its own difficulties hence it is important to be able to represent these spaces as reproducing kernel Hilbert spaces with an explicitly written kernel. Hence we will now focus on the reproducing kernel of the $H(b)$ spaces:

First of all, the reproducing kernel for the classical Hardy-Hibert space $H^2(\mathbb{B}^n)$ is
\begin{equation}\label{classicalkernel}
K(z,w)=\frac{1}{(1-\langle z,w\rangle)^n}
\end{equation}
and we will next show that the reproducing kernel for the $H(b)$ space can be written in terms of the classical kernel (\ref{classicalkernel}):
\begin{theorem}\label{reproducingkernel}
The reproducing kernel of $H(b)$ is
\begin{equation}\label{newrepkernel}
K^b(z,w)=(I-T_{b}T_{\overline{b}})K(z,w)=(1-\overline{b(z)}b(w))K(z,w)~~z,w\in\mathbb{B}^n
\end{equation}
or equivalently
\begin{equation}\label{newrepkernelexp}
K^b(z,w)=\frac{(1-\overline{b(z)}b(w))}{(1-\langle z,w\rangle)^n}
\end{equation}
\end{theorem}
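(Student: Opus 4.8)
The idea is to use the description of $H(b)$ as an operator range carrying the range norm $(\ref{innerproduct})$, together with the general principle that a bounded operator on a reproducing kernel Hilbert space transports the kernel in a transparent way. Set $A:=(I-T_{b}T_{\overline{b}})^{1/2}$ and let $K_w:=K(\cdot,w)$ denote the reproducing kernel of $H^2(\mathbb{B}^n)$. Since $\|b\|_{\infty}\le1$, the Toeplitz operator $T_b$ is simply the (contractive) multiplication operator $M_b$ on $H^2(\mathbb{B}^n)$, $T_{\overline{b}}=T_b^{\,*}$, and $0\le I-T_{b}T_{\overline{b}}\le I$; hence $A$ is a well-defined positive contraction with $A=A^{*}$, and $H(b)=\operatorname{ran}A$ with $\|Af\|_b=\|f\|_2$ for $f\perp \mathrm{Ker}\,A$.

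First I would record two soft facts. (i) $H(b)\subseteq H^2(\mathbb{B}^n)$ with $\|\cdot\|_2\le\|\cdot\|_b$: if $g=Af$ with $f\perp\mathrm{Ker}\,A$ then $\|g\|_2\le\|A\|\,\|f\|_2\le\|f\|_2=\|g\|_b$. Consequently point evaluations are bounded on $H(b)$ (they are already bounded on $H^2(\mathbb{B}^n)$), so $H(b)$ is a reproducing kernel Hilbert space whose kernel $K^b_w$ is characterized by $\langle g,K^b_w\rangle_b=g(w)$ for all $g\in H(b)$. (ii) $T_{\overline{b}}K_w=\overline{b(w)}\,K_w$: for every $g\in H^2(\mathbb{B}^n)$ one has $\langle g,T_{\overline{b}}K_w\rangle_2=\langle T_{b}g,K_w\rangle_2=(bg)(w)=b(w)g(w)=\langle g,\overline{b(w)}K_w\rangle_2$.

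Next I would verify that $K^b_w=A^{2}K_w=(I-T_{b}T_{\overline{b}})K_w$ is the kernel. It belongs to $\operatorname{ran}A$ because $A^{2}K_w=A(AK_w)$; moreover $AK_w\in\operatorname{ran}A\subseteq(\mathrm{Ker}\,A)^{\perp}$ since $A=A^{*}$. Hence, writing a generic element of $H(b)$ as $g=Af$ with $f\perp\mathrm{Ker}\,A$, the range inner product gives $\langle g,K^b_w\rangle_b=\langle Af,A(AK_w)\rangle_b=\langle f,AK_w\rangle_2=\langle Af,K_w\rangle_2=g(w)$, which is exactly the reproducing property. This verification is the one place where a bit of care is needed: one must correctly handle the projection onto $(\mathrm{Ker}\,A)^{\perp}$ built into the definition $(\ref{innerproduct})$ of the norm — concretely, that $\langle Ax,Ay\rangle_b=\langle P_{(\mathrm{Ker}\,A)^{\perp}}x,P_{(\mathrm{Ker}\,A)^{\perp}}y\rangle_2$ and that $AK_w$ already lies in $(\mathrm{Ker}\,A)^{\perp}$; self-adjointness of $A$ is what makes this clean.

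Finally the explicit form is immediate from (ii): $(I-T_{b}T_{\overline{b}})K_w=K_w-T_b\big(\overline{b(w)}K_w\big)=\big(1-\overline{b(w)}\,b\big)K_w$, so evaluating at $z$ and substituting $(\ref{classicalkernel})$ yields the quotient $(\ref{newrepkernelexp})$. Thus the only external inputs needed are the standard facts — valid on $\mathbb{B}^n$ by Rudin's development — that $T_b=M_b$ is a contraction with $T_{\overline{b}}=T_b^{\,*}$, and that $H^2(\mathbb{B}^n)$ is a reproducing kernel Hilbert space with kernel $(\ref{classicalkernel})$; everything else is formal.
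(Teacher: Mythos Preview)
Your proof is correct and follows the same path as the paper: first establish $K^b_w=(I-T_bT_{\overline b})K_w$, then compute explicitly using $T_{\overline b}K_w=\overline{b(w)}K_w$. The only difference is that the paper obtains the first identity by invoking (\cite{Fri2}, Theorem~16.13), whereas you verify the reproducing property directly from the range-norm definition~(\ref{innerproduct}); your argument is thus a self-contained proof of precisely the general fact the paper cites.
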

\begin{proof}
By definition $H(b)=(I-T_{b}T_{\overline{b}})^{1/2}H^2(\mathbb{B}^n)$ and for all $f\in H^2(\mathbb{B}^n)$ we have
\begin{equation}\label{classicalrep}
f(z)=\int_{S^n}\frac{f(\xi)}{(1-\langle z,\xi\rangle)^n}d\sigma(\xi)
\end{equation}
so by (\cite{Fri2}, Theorem 16.13) we have
$$
K^b(z,w)=(I-T_{b}T_{\overline{b}})K(z,w).
$$
As it can be easily verified (for more details see \cite{Zhu}), we have
$$
T_{\overline{b}}K(z,w)=\overline{b(w)}K(z,w)
$$
and
$$
T_{b}K(z,w)=b(z)K(z,w)
$$
hence
$$
K^b(z,w)=\frac{(1-\overline{b(z)}b(w))}{(1-\langle z,w\rangle)^n}.
$$
\end{proof}

Now since
$$
f(z)=\langle f,K(z,.)\rangle_{H^2(\mathbb{B}^n)}=\langle f,K^b(z,.)\rangle_{b}
$$
and $\|K^{b}(z,z)\|^{2}_{2}=K^b(z,z)$, for all $z\in \mathbb{B}^n$ we have
$$
\|K^b(z,z)\|_{b}=\left(\frac{1-|b(z)|^2}{(1-\|z\|^2)^n}\right)^{^1/2}.
$$

\section{Integral Representation of $H(b)$ and Clark Measures}

Although we have the explicit formulation of the reproducing kernel $K^b(z,w)$ we still cannot represent an $H(b)$ function as in (\ref{classicalrep}) since the inner product $\langle . , .\rangle_b$ is given implicitly with respect to the classical inner product. However, one can still have an integral representation for $H(b)$ spaces using Clark measures:

Let $z\in\mathbb{B}^n$, $\xi\in S^n$ with $\langle z,\xi\rangle\neq 1$ then the equality
$$
C(z,\xi)=\frac{1}{(1-\langle z,\xi\rangle)^n}
$$
defines the Cauchy kernel for $\mathbb{B}^n$. Then, the invariant Poisson kernel is given by the formula
$$
P(z,\xi)=\frac{C(z,\xi)C(\xi,z)}{C(z,z)}=\left(\frac{1-\|z\|^2}{|1-\langle z,\xi\rangle|^2}\right)^n.
$$
\begin{defn}
Given an $\alpha\in \mathbb{T}$ and a holomorphic function $\varphi:\mathbb{B}^n\rightarrow\mathbb{D}$ the quotient
$$
Re\left(\frac{\alpha+\varphi(z)}{\alpha-\varphi(z)}\right)=\frac{1-|\varphi(z)|^2}{|\alpha-\varphi(z)|^2}
$$
is positive and pluriharmonic therefore there exists a unique positive measure $\mu$ such that
$$
\int_{S^n}P(z,\xi)d\mu(\xi)=Re\left(\frac{\alpha+\varphi(z)}{\alpha-\varphi(z)}\right).
$$
\end{defn}

The measure $\mu$ is called the Clark measure associated to $\varphi$.

\begin{lemma}
Let $\mu$ denote the Clark measure of $b$. Then for $z,w\in \mathbb{B}^n$,
$$
\langle K(.,w),K(.,z)\rangle_\mu=\int_{S^n}K(\xi,w)\overline{K(\xi,z)}d\mu(\xi)=\frac{K^b(z,w)}{(1-\overline{b(w)})(1-b(z))}.
$$
\end{lemma}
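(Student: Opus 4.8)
The plan is to compute the measure integral $\int_{S^n} K(\xi,w)\overline{K(\xi,z)}\,d\mu(\xi)$ by reducing it to the defining property of the Clark measure, namely $\int_{S^n} P(z,\xi)\,d\mu(\xi) = \mathrm{Re}\bigl((1+b(z))/(1-b(z))\bigr)$ (taking $\alpha=1$ and $\varphi=b$, which is the normalization implicit in the statement since the denominators involve $1-b$ rather than $\alpha-b$). First I would record the elementary algebraic identity expressing the Poisson kernel in terms of Cauchy kernels, $P(z,\xi) = C(z,\xi)C(\xi,z)/C(z,z) = K(\xi,z)\overline{K(\xi,z)}(1-\|z\|^2)^n$, and more usefully the polarized version: for two points $z,w$ one has a ``two-variable Poisson-type kernel'' $C(z,\xi)\overline{C(w,\xi)}/\text{(something)}$ that is pluriharmonic in a suitable sense. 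The cleanest route is to polarize the real-part identity: the map $z \mapsto (1+b(z))/(1-b(z))$ is holomorphic with positive real part, and its real part is reproduced by $\mu$ via the Poisson kernel; polarization in $(z,w)$ then yields
$$
\int_{S^n} \frac{d\mu(\xi)}{(1-\langle z,\xi\rangle)^n (1-\langle \xi,w\rangle)^n} = \frac{1}{(1-\langle z,w\rangle)^n}\cdot\frac{1-\overline{b(w)}b(z)}{(1-b(z))(1-\overline{b(w)})},
$$
which is exactly the claimed formula once one writes $K(\xi,w)=\overline{C(w,\xi)}=1/(1-\langle\xi,w\rangle)^n$ — wait, one must be careful with which slot is conjugated, so I would first fix conventions: $K(\cdot,w)(\xi) = 1/(1-\langle \xi,w\rangle)^n$, hence $\overline{K(\xi,z)} = 1/(1-\langle z,\xi\rangle)^n$, and the integrand is precisely $C(z,\xi)C(\xi,w)$ — wait, $C(\xi,w) = 1/(1-\langle\xi,w\rangle)^n = K(\xi,w)$. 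Good.

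The key steps, in order: (1) normalize the Clark measure with $\alpha=1$, $\varphi=b$, and write out the scalar identity $\int P(z,\cdot)\,d\mu = \mathrm{Re}\frac{1+b(z)}{1-b(z)} = \frac{1-|b(z)|^2}{|1-b(z)|^2}$. (2) Observe that both sides of the desired identity, as functions of $(z,w)$, are sesquiholomorphic — holomorphic in $z$, antiholomorphic in $w$ — so by a standard polarization/uniqueness argument it suffices to verify the identity on the diagonal $w=z$. (3) On the diagonal the left side becomes $\int_{S^n} |K(\xi,z)|^2\,d\mu(\xi) = \frac{1}{(1-\|z\|^2)^n}\int_{S^n} P(z,\xi)\,d\mu(\xi)$, using $|K(\xi,z)|^2 = |C(z,\xi)|^2 = P(z,\xi)\,C(z,z) = P(z,\xi)/(1-\|z\|^2)^n$. (4) Substitute the Clark identity from step (1) to get $\frac{1}{(1-\|z\|^2)^n}\cdot\frac{1-|b(z)|^2}{|1-b(z)|^2}$, and check this equals the right-hand side on the diagonal, i.e. $\frac{K^b(z,z)}{|1-b(z)|^2} = \frac{1-|b(z)|^2}{(1-\|z\|^2)^n |1-b(z)|^2}$ using Theorem~\ref{reproducingkernel}. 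These match, completing the argument.

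The main obstacle I anticipate is justifying the polarization step rigorously: one needs that a sesquiholomorphic function on $\mathbb{B}^n\times\mathbb{B}^n$ is determined by its restriction to the diagonal, which is true but deserves a line of justification (e.g. expand in a power series in $z$ and $\bar w$; the diagonal values determine all coefficients $c_{\alpha\beta}$ because the diagonal restriction is real-analytic and its Taylor coefficients separate the bihomogeneous pieces — alternatively, use that $F(z,w)$ and $G(z,w)$ agreeing on the diagonal of a domain in $\mathbb{C}^n\times\overline{\mathbb{C}^n}$ forces $F=G$ by analytic continuation after the substitution $\bar w \mapsto u$). A secondary point requiring care is the interchange of integration and the manipulations with the (finite, positive) measure $\mu$; since $K(\cdot,z)$ is bounded on $S^n$ for fixed $z\in\mathbb{B}^n$ and $\mu$ is finite, all integrals converge absolutely and Fubini-type issues do not arise. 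I would also need to confirm at the outset that the convention in this paper takes $\alpha=1$ in the definition of the Clark measure of $b$; if instead a general $\alpha$ were intended, the formula would carry factors $(\alpha-b(z))$ and $(\overline{\alpha}-\overline{b(w)})$ in place of $(1-b(z))$ and $(1-\overline{b(w)})$, so matching the stated denominators pins down $\alpha=1$.
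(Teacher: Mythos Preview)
Your argument is correct and gives a genuinely self-contained proof, whereas the paper's proof is essentially a one-line citation: it takes $\alpha=1$ and invokes Proposition~2.2 of Aleksandrov--Doubtsov \cite{AlexDubs}, which already states the identity
\[
\int_{S^n} C(z,\xi)C(\xi,w)\,d\mu(\xi)=\frac{1-b(z)\overline{b(w)}}{(1-b(z))(1-\overline{b(w)})}\,C(z,w),
\]
noting only that the inner-function hypothesis there is not actually used. Your polarization approach is in effect a direct proof of that proposition in this setting: you check the diagonal case $w=z$ by reducing the integrand $|K(\xi,z)|^2$ to $(1-\|z\|^2)^{-n}P(z,\xi)$ and applying the defining Poisson identity for $\mu$, and then use the uniqueness of sesquiholomorphic extensions off the diagonal. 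What you gain is independence from the external reference and a transparent explanation of \emph{why} the formula holds; what the paper's approach gains is brevity. Your justification of the polarization step (via the maximal totally real diagonal, or equivalently via the linear independence of the monomials $z^\alpha\bar z^\beta$) is sound, and the finiteness of $\mu$ together with the boundedness of $K(\cdot,z)$ on $S^n$ for fixed $z\in\mathbb{B}^n$ handles all convergence issues, as you note.
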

\begin{proof}
Take $\alpha=1\in \mathbb{T}$ then by (Proposition 2.2,\cite{AlexDubs}) we have
$$
\int_{S^n}C(z,\xi)C(\xi,w)d\mu(\xi)=\frac{1-b(z)\overline{b(w)}}{(1-b(z)(1-\overline{b(w)}))}C(z,w)
$$
[Note that $I$ being inner plays no role in the proof of (Proposition 2.2,\cite{AlexDubs}) so we can apply it to the case of $b$.] Hence from the definition of $K^b(z,w)$ and the Clark measure we obtain the result.
\end{proof}

Since we have the relation
$$
K^b(z,w)=\langle K^b(z,w),K^b(z,z)\rangle_b
$$
the equation
$$
\langle K(.,w),K(.,z)\rangle_\mu=\frac{K^b(z,w)}{(1-\overline{b(w)})(1-b(z))}
$$
can be written as
\begin{equation}\label{clarkinnerproduct}
\langle K(.,w),K(.,z)\rangle_\mu=\langle \frac{K^b(.,w)}{1-\overline{b(w)}}, \frac{K^b(.,z)}{1-\overline{b(z)}}\rangle_b.
\end{equation}
For the Clark measure $\mu$, let us define the following integral operator
\begin{equation}\label{clarkintegralrep}
K_\mu f(z)=\int_{S^n}\frac{f(\xi)}{(1-\langle z,\xi\rangle)^n}d\mu(\xi)
\end{equation}
then by (Lemma 13.9,\cite{Fri1}) $V_b(f)(z)=(1-b(z))K_\mu f(z)$ is a well-defined continuous operator from $L^2(\mu)$ into $\mathcal{O}(\mathbb{B}^n)$. [Since the lemma applies directly to $\mathbb{B}^n$ case we do not repeat the proof here.]

The relation (\ref{clarkinnerproduct}) suggests that
$$
H^2(\mu)\rightarrow H(b)
$$
$$
K(z,w)\rightarrow\frac{K^b(z,w)}{1-\overline{b(w)}}
$$
is related to $V_b(f)(z)$ :
\begin{theorem}\label{maintheorem1}
The mapping $V_b$ is a partial isometry from $L^2(\mu)$ onto $H(b)$ and $KerV_b=(H^2(\mu))^{\bot}$. Moreover,
$$
V_bK(z,w)=\frac{K^b(z,w)}{1-\overline{b(w)}},~~w\in\mathbb{B}^n.
$$
\end{theorem}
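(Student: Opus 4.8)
The plan is to build the operator on the natural dense subspace spanned by Cauchy kernels, verify it is isometric there with respect to the $H^2(\mu)$-norm on the source and the $b$-norm on the target, and then extend by continuity and take orthogonal complements to get the ``partial isometry'' statement. First I would record that the span of $\{K(\cdot,w):w\in\mathbb{B}^n\}$ is dense in $H^2(\mu)$ — this is the closure defining $H^2(\mu)$ as a subspace of $L^2(\mu)$, so there is nothing to prove beyond citing the definition — and that the span of $\{K^b(\cdot,w):w\in\mathbb{B}^n\}$ is dense in $H(b)$ since $K^b$ is the reproducing kernel of $H(b)$ (Theorem \ref{reproducingkernel}). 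Then I would define, on finite linear combinations $\sum_j c_j K(\cdot,w_j)$, the map sending this to $\sum_j c_j\,\dfrac{K^b(\cdot,w_j)}{1-\overline{b(w_j)}}$, and check it agrees with $V_b$ as defined by $V_b f(z)=(1-b(z))K_\mu f(z)$; this last point is a direct computation: $K_\mu K(\cdot,w)(z)=\int_{S^n}C(z,\xi)C(\xi,w)\,d\mu(\xi)$, which by the Lemma equals $\dfrac{1-b(z)\overline{b(w)}}{(1-b(z))(1-\overline{b(w)})}C(z,w)=\dfrac{K^b(z,w)}{(1-b(z))(1-\overline{b(w)})}$, so multiplying by $(1-b(z))$ gives exactly $\dfrac{K^b(z,w)}{1-\overline{b(w)}}$. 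This simultaneously proves the displayed identity $V_bK(z,w)=\dfrac{K^b(z,w)}{1-\overline{b(w)}}$.

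Next I would prove isometry on this subspace. For two kernels, $\langle K(\cdot,w),K(\cdot,z)\rangle_\mu=\dfrac{K^b(z,w)}{(1-\overline{b(w)})(1-b(z))}$ by the Lemma, while on the target side $\Big\langle \dfrac{K^b(\cdot,w)}{1-\overline{b(w)}},\dfrac{K^b(\cdot,z)}{1-\overline{b(z)}}\Big\rangle_b=\dfrac{1}{(1-\overline{b(w)})(\overline{1-\overline{b(z)}})}\langle K^b(\cdot,w),K^b(\cdot,z)\rangle_b=\dfrac{K^b(z,w)}{(1-\overline{b(w)})(1-b(z))}$ using the reproducing property $\langle K^b(\cdot,w),K^b(\cdot,z)\rangle_b=K^b(z,w)$ and $\overline{1-\overline{b(z)}}=1-b(z)$. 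The two expressions coincide, so $V_b$ preserves inner products on the Cauchy-kernel span; this is exactly the content of the already-recorded relation \eqref{clarkinnerproduct}. By sesquilinearity it is isometric on all finite combinations, hence extends uniquely to an isometry from $\overline{\mathrm{span}}\{K(\cdot,w)\}=H^2(\mu)$ into $H(b)$. Continuity of $V_b$ on all of $L^2(\mu)$ is already known from (Lemma 13.9, \cite{Fri1}); I would then note that an operator on $L^2(\mu)$ that is isometric on the closed subspace $H^2(\mu)$ and whose restriction to that subspace is what we just built is a partial isometry with initial space $H^2(\mu)$, provided it annihilates $(H^2(\mu))^\perp$. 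That vanishing is where I would spend a sentence: one must check $K_\mu f\equiv 0$ for $f\perp H^2(\mu)$ in $L^2(\mu)$, which holds because $K_\mu f(z)=\langle f, \overline{C(z,\cdot)}\rangle_\mu=\langle f,K(\cdot,\bar{?})\rangle$ — more precisely $K_\mu f(z)=\int f(\xi)\overline{\overline{C(z,\xi)}}\,d\mu$, and $\overline{C(z,\cdot)}$ is (the conjugate of) a Cauchy kernel, so it lies in $H^2(\mu)$; orthogonality then forces the integral to vanish for every $z$, giving $f\in\mathrm{Ker}\,V_b$. Combined with injectivity on $H^2(\mu)$ (from isometry), this yields $\mathrm{Ker}\,V_b=(H^2(\mu))^\perp$ exactly.

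Finally, surjectivity onto $H(b)$: the range of $V_b$ is closed (it is the isometric image of the Hilbert space $H^2(\mu)$) and contains every $\dfrac{K^b(\cdot,w)}{1-\overline{b(w)}}$, hence contains the dense span of $\{K^b(\cdot,w)\}$ in $H(b)$; a closed subspace that is dense is everything, so $V_b$ maps \emph{onto} $H(b)$. The main obstacle I anticipate is not any single hard estimate but rather the bookkeeping of conjugates and the precise identification of which Cauchy kernel $\overline{C(z,\cdot)}$ corresponds to, together with making sure the two norms in \eqref{clarkinnerproduct} are matched on the correct variable (the kernel $K(z,w)$ is conjugate-linear in one slot), so that the isometry is genuine and not merely an equality of diagonal norms. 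Once that is pinned down, everything else is the standard ``isometric on a kernel span $\Rightarrow$ partial isometry'' argument.
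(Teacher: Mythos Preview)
Your proposal is correct and follows essentially the same route as the paper: compute $V_b$ on Cauchy kernels via the Lemma to get the displayed identity, use \eqref{clarkinnerproduct} to obtain isometry on the kernel span, extend by density to $H^2(\mu)$, and conclude surjectivity because the closed range contains all $K^b(\cdot,w)$. You are actually a bit more careful than the paper in one place---you explicitly verify $V_b$ annihilates $(H^2(\mu))^\perp$ via $K_\mu f(z)=\langle f,K(\cdot,z)\rangle_\mu$, whereas the paper simply asserts the partial-isometry statement---while the paper in turn is slightly more explicit about reconciling the abstract isometric extension with the concrete operator $(1-b)K_\mu$ by invoking continuity into $\mathcal{O}(\mathbb{B}^n)$ and pointwise evaluation in $H(b)$.
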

\begin{proof}
First of all by the previous lemma and (\ref{clarkintegralrep}) we have,
\begin{eqnarray}
V_bK(z,w)&=&(1-b(z))K_\mu K(.,w)(z)\nonumber\\
&=& (1-b(z))\int_{S^n}\frac{K(\xi,w)}{(1-\langle z,\xi\rangle)^n}d\mu(\xi)\nonumber\\
&=& (1-b(z))\int_{S^n}\frac{1}{((1-\langle \xi,w\rangle)^n)(1-\langle z,\xi\rangle)^n}d\mu(\xi)\nonumber\\
&=& (1-b(z))\langle K(.,w),K(.,z)\rangle_\mu =\frac{K^b(z,w)}{1-\overline{b(w)}}\label{operatorkernelrep}
\end{eqnarray}
and by (\ref{clarkinnerproduct}) we get,
$$
\langle K(.,w),K(.,z)\rangle_\mu=\langle V_bK(.,w),V_bK(.,z)\rangle_b.
$$
In particular $\|g\|_{L^2(\mu)}=\|V_bg\|_b$ when $g$ is a finite linear combination of kernel functions $K(.,w_1),\dots,K(.,w_n)$ for some $w_1,\dots,w_n\in \mathbb{B}^n$. For a $g\in H^2(\mu)$, the generalized Hardy space [for details see \cite{Fri1}, chapter 5] by (Theorem 5.11, \cite{Fri1}) we know that there exists a sequence $\{g_n\}$ converging to $g\in H^2(\mu)$ where each $g_n$ is a finite linear combination of kernel functions. Since $V_b$ is continuous we have $V_bg_n\rightarrow V_bg$ both in the topology of $\mathcal{O}(\mathbb{B}^n)$ and pointwise in $\mathbb{B}^n$. Since $V_bg_n$ is a Cauchy sequence there exists $f\in H(b)$ such that $V_bg_n\rightarrow f$ and by continuity of evaluations on $H(b)$ for $z\in \mathbb{B}^n$ $V_bg_n(z)\rightarrow f(z)$ and $V_bg=f$. Now since $\|g\|_{L^2(\mu)}=\|V_bg\|_b$ for all finite linear combinations one gets
$$
\|V_bg\|=\|f\|_b=\lim_{n\rightarrow \infty}\|V_bg_n\|_b=\lim_{n\rightarrow \infty}\|g_n\|_{L^2(\mu)}=\|g\|_{L^2(\mu)}.
$$
Hence $V_b$ is an isometry and by (\ref{operatorkernelrep}) range of $V_b$ contains all elements $K^b(z,w)$, $w\in\mathbb{B}^n$ so $V_bH^2(\mu)=H(b)$. Then we have
$$
V_b:L^2(\mu)\rightarrow H(b)
$$
$$
g\rightarrow(1-b)K_\mu g
$$
is a partial isometry so the operator
$$
V_b:H^2(\mu)\rightarrow H(b)
$$
$$
g\rightarrow(1-b)K_\mu g
$$
is a unitary operator and for an $f\in H(b)$ there exists a unique $g\in H^2(\mu)$ such that $f=V_bg$ and
\begin{equation}\label{integralrepofH(b)}
f(z)=(1-b(z))\int_{S^n}\frac{g(\xi)}{(1-\langle z,\xi\rangle)^n}d\mu(\xi),~~z\in\mathbb{B}^n.
\end{equation}

\end{proof}

\section{Restricted Limits and Finite Angular Derivatives of $H(b)$ Functions of $\mathbb{B}^n$}

$H(b)$ functions are a subclass of the Hardy-Hilbert space $H^2(\mathbb{B}^n)$ and Hardy space functions have boundary values through specific approach regions. In this section we will consider the restricted limits of $H(b)$ class and their relation to the finite angular derivatives.

\begin{defn}
We say $f$ has restricted limit at $\xi$ on $S^n$ if $f$ has limit $f^*(\xi)$ along every curve $\Lambda(t)$ approaching $\xi$ that satisfies
$$
(1)\lim_{t\rightarrow1}\frac{|\Lambda(t)-\lambda(t)|^2}{1-|\lambda(t)|^2}=0
$$
and
$$
(2)\frac{|\lambda(t)-\xi|}{1-|\lambda(t)|}\leq M<\infty~~\text{for}~~0\leq t<1
$$
where $\lambda(t)$ is the orthogonal projection of $\Lambda(t)$ onto the complex line $[\xi]$ through $0$ and $\xi$ and $\lambda(t)=\langle\Lambda(t),\xi\rangle$.
\end{defn}

For $\alpha>1$, let
$$
\Gamma(\xi,\alpha)=\{z\in\mathbb{B}^n:~~|1-\langle z,\xi\rangle|<\frac{\alpha}{2}(1-\|z\|^2)\},
$$
\begin{theorem}[\cite{Cow}, 2.79]
Suppose $f$ is holomorphic in $\mathbb{B}^n$ and bounded in every approach region $\Gamma(\xi,\alpha)$. If $\lim_{r\rightarrow 1}f(r\xi)$ exists then $f$ has restricted limit at $\xi$.
\end{theorem}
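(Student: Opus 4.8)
The plan is to reduce this several-variable statement to the classical one-variable Lindel\"of theorem, by slicing along the complex line $[\xi]$ and controlling the transverse error with the Schwarz lemma. After a unitary rotation I may assume $\xi=e_1=(1,0,\dots,0)$ and write $z=(z_1,z')$ with $z_1\in\mathbb{D}$ and $z'\in\mathbb{C}^{n-1}$. Given an admissible curve $\Lambda(t)\to\xi$, I set $\lambda(t)=\langle\Lambda(t),\xi\rangle$ (the first coordinate of $\Lambda(t)$) and let $z'(t)$ be the remaining coordinates, so that the orthogonal projection of $\Lambda(t)$ onto $[\xi]$ is $\lambda(t)\xi=(\lambda(t),0,\dots,0)$, the quantity $|\Lambda(t)-\lambda(t)|$ of (1) equals $|z'(t)|$, and conditions (1), (2) become $|z'(t)|^2/(1-|\lambda(t)|^2)\to 0$ and $|1-\lambda(t)|\le M(1-|\lambda(t)|)$, i.e.\ $\lambda(t)\to 1$ inside a fixed nontangential approach region of $\mathbb{D}$. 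Writing $L=\lim_{r\to1}f(r\xi)$, the goal is to show $f(\Lambda(t))\to L$.

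I would first dispose of the projected curve $t\mapsto\lambda(t)\xi$, which lives on $[\xi]\cap\mathbb{B}^n\cong\mathbb{D}$. The slice $g(w)=f(w\xi)=f(w,0,\dots,0)$ is holomorphic on $\mathbb{D}$; using $1-|w|^2\ge 1-|w|$ one checks that for each $M'\ge 1$ the image under $w\mapsto w\xi$ of $\{w\in\mathbb{D}:|1-w|\le M'(1-|w|)\}$ lies in the Koranyi region $\Gamma(\xi,2M'+1)$, on which $f$ is bounded by hypothesis, so $g$ is bounded in every nontangential region at $1$. Since $g(r)=f(r\xi)\to L$, the classical Lindel\"of theorem for angular regions gives $g(w)\to L$ as $w\to 1$ within every fixed nontangential region, hence, by (2), $f(\lambda(t)\xi)=g(\lambda(t))\to L$.

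The crux is to prove $f(\Lambda(t))-f(\lambda(t)\xi)\to 0$. Put $\rho_t=\bigl((1-|\lambda(t)|^2)/(2|z'(t)|^2)\bigr)^{1/2}$, which tends to $\infty$ by (1). For $|s|\le\rho_t$ the point $(\lambda(t),sz'(t))$ has first coordinate $\lambda(t)$ and satisfies $1-\|(\lambda(t),sz'(t))\|^2\ge\tfrac12(1-|\lambda(t)|^2)$, from which $|1-\lambda(t)|<\tfrac12(4M+1)\bigl(1-\|(\lambda(t),sz'(t))\|^2\bigr)$; thus the closed disc $\{|s|\le\rho_t\}$ is carried by $s\mapsto(\lambda(t),sz'(t))$ into $\Gamma(\xi,4M+1)$, on which $|f|\le C$ for some finite $C=C(M)$ independent of $t$. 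The function $h_t(s)=f(\lambda(t),sz'(t))$ is then holomorphic on $\{|s|<\sqrt2\,\rho_t\}$, with $h_t(0)=f(\lambda(t)\xi)$ and $h_t(1)=f(\Lambda(t))$; applying the Schwarz lemma to $v\mapsto\bigl(h_t(\rho_t v)-h_t(0)\bigr)/(2C)$ on the unit disc gives, once $\rho_t>1$, the bound $|f(\Lambda(t))-f(\lambda(t)\xi)|=|h_t(1)-h_t(0)|\le 2C/\rho_t\to 0$. Together with the preceding paragraph this yields $f(\Lambda(t))\to L$; as $\Lambda$ was an arbitrary admissible curve, $f$ has restricted limit $f^*(\xi)=L$ at $\xi$.

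The step I expect to be the main obstacle is this last estimate: one must produce, inside $\mathbb{B}^n$, a complex one-dimensional slice passing through both $\lambda(t)\xi$ and $\Lambda(t)$ that stays in a single fixed Koranyi region $\Gamma(\xi,\alpha)$ (so that the a priori bound on $f$ survives) while the radius on which that bound holds grows at exactly the rate forced by condition (1). Pinning down the constants — the choice $\alpha=4M+1$, the factor $1/\sqrt2$, and the strictness of the inequality defining $\Gamma(\xi,\alpha)$ — is the only delicate bookkeeping; everything else reduces to elementary inequalities among $|1-\langle z,\xi\rangle|$, $1-\|z\|^2$ and $1-|\langle z,\xi\rangle|^2$, and to the one-variable Lindel\"of theorem. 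One could instead observe that $\Lambda(t)$ and $\lambda(t)\xi$ are pseudohyperbolically close in $\mathbb{B}^n$, since $\rho\bigl(\Lambda(t),\lambda(t)\xi\bigr)^2=|z'(t)|^2/(1-|\lambda(t)|^2)\to 0$; but as $f$ is only assumed bounded on Koranyi regions, not on all of $\mathbb{B}^n$, the Schwarz--Pick inequality of $\mathbb{B}^n$ is not directly available, and the explicit slice argument above is what one actually needs.
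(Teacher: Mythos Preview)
The paper does not give its own proof of this statement: it is quoted verbatim from \cite{Cow}, Theorem~2.79, and used as a black box. So there is nothing in the paper to compare your argument against.

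That said, your proof is correct and is in fact the standard argument one finds in the cited reference (and in Rudin's Chapter~8): rotate so that $\xi=e_1$, handle the projected curve on the complex line $[\xi]$ by the one-variable Lindel\"of theorem, and then control the transverse error by applying the Schwarz lemma on the affine slice $s\mapsto(\lambda(t),sz'(t))$, after checking that a disc of radius $\rho_t\to\infty$ in the $s$-variable stays inside a fixed Koranyi region $\Gamma(\xi,4M+1)$. Your constant-tracking is accurate: the inclusion of the one-variable Stolz region into $\Gamma(\xi,2M'+1)$ and the estimate $1-\|(\lambda(t),sz'(t))\|^2\ge\tfrac12(1-|\lambda(t)|^2)$ both check out, and the Schwarz-lemma step gives exactly $|h_t(1)-h_t(0)|\le 2C/\rho_t$. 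The only point worth flagging is that the ``classical Lindel\"of theorem'' you invoke must be the sectorial version (boundedness only in each Stolz angle, not on all of $\mathbb{D}$); this is standard, but it is a slightly stronger statement than the textbook Lindel\"of theorem for globally bounded functions, so you may want to cite it precisely.
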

Now let us consider the angular derivatives for the class $H(b)$:
\begin{defn}
We say $\varphi: \mathbb{B}^n\rightarrow\mathbb{B}^m$ has finite angular derivative at $\xi\in S^n$ if there exists $\eta\in S^m$ so that
$$
\frac{\langle \varphi(z)-\eta,\eta\rangle}{\langle \xi-z,\xi\rangle}
$$
has finite restricted limit at $\xi$.
\end{defn}

\begin{defn}
We say that a function has angular derivative in the sense of Carath\'{e}odory at $\xi\in S^n$ if it has finite angular derivative at $\xi$ and moreover $|f(\xi)|=1$.
\end{defn}

The following theorem is the main result of this section that characterizes the admissible limits of $H(b)$ functions through angular derivatives:
\begin{theorem}\label{maintheorem2}
Let $b$ be holomorphic in $\mathbb{B}^n$, let $\xi\in S^n$ and put
$$
c=\lim_{z\rightarrow\xi} \frac{1-|b(z)|}{1-\|z\|}.
$$
Then the following are equivalent
\begin{itemize}
\item[(i)] $c$ is finite
\item[(ii)] There is $\eta\in\mathbb{T}$ such that
$$
\frac{\eta-b(z)}{(1-\langle z,\xi\rangle)^n}\in H(b)
$$
\item[(iii)] For all functions $f\in H(b)$, $f$ has admissible limit at $\xi$ (i.e it has a limit $f^*(\xi)$ along every curve lying in some admissible approach region $\Gamma(\xi,\alpha)$).
\item[(iv)] $b$ has angular derivative in the sense of Carath\'{e}odory.
\end{itemize}
\end{theorem}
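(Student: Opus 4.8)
The plan is to run the classical de Branges--Rovnyak circle of implications, adapted to the ball: first $(i)\Leftrightarrow(iv)$, then $(i)\Rightarrow(ii)\Rightarrow(iii)\Rightarrow(i)$. Throughout I write $\eta$ for the prospective admissible boundary value of $b$ at $\xi$ and set $k^b_\xi(z):=\dfrac{1-\bar\eta\, b(z)}{(1-\langle z,\xi\rangle)^n}$; since $1-\bar\eta b(z)=\bar\eta(\eta-b(z))$ this differs from the function in $(ii)$ only by the unimodular factor $\bar\eta$, and formally $k^b_\xi$ is the boundary trace at $\xi$ of the reproducing kernel $K^b(\cdot,w)$ of Theorem~\ref{reproducingkernel}. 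The object linking all four conditions is the Julia-type quotient $K^b(z,z)=\|K^b(z,\cdot)\|_b^{2}=\dfrac{1-|b(z)|^2}{(1-\|z\|^2)^n}$, which controls the norms of the point evaluations $f\mapsto f(z)$ on $H(b)$.

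For $(i)\Leftrightarrow(iv)$ I would appeal to the Julia--Carath\'eodory theorem on $\mathbb{B}^n$ (see \cite{Rud} and \cite{Cow}). The finiteness of $c$ is the Julia estimate; Carath\'eodory's part upgrades it to the existence of the admissible limit $b(\xi)=\eta\in\mathbb{T}$ together with a finite restricted limit of $\dfrac{\langle b(z)-\eta,\eta\rangle}{\langle\xi-z,\xi\rangle}$, which is $(iv)$, while conversely $(iv)\Rightarrow(i)$ is immediate because the radial Julia quotient is dominated by that restricted limit. The only care needed here is the routine bookkeeping relating the scalar quotient $\frac{1-|b(z)|}{1-\|z\|}$ to the vector difference quotient.

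For $(i)\Rightarrow(ii)$ I would use the Clark-measure representation of Theorem~\ref{maintheorem1}. For every $w\in\mathbb{B}^n$ the function $\dfrac{K^b(\cdot,w)}{1-\overline{b(w)}}=V_b K(\cdot,w)$ lies in $H(b)$, and by the Lemma preceding (\ref{clarkinnerproduct}) its norm is $\big\|V_bK(\cdot,w)\big\|_b=\|K(\cdot,w)\|_{L^2(\mu)}=\left(\dfrac{K^b(w,w)}{|1-b(w)|^2}\right)^{1/2}$. Letting $w\to\xi$ through a Kor\'anyi region $\Gamma(\xi,\alpha)$ and using the full strength of Julia--Carath\'eodory --- in particular that $\frac{1-\bar\eta b(w)}{1-\langle w,\xi\rangle}$ stays bounded on $\Gamma(\xi,\alpha)$, where moreover $|1-\langle w,\xi\rangle|\asymp 1-\|w\|^2$ --- keeps this family bounded in $H(b)$; a weak-compactness argument then produces a weak limit $h\in H(b)$, and pairing $h$ against the kernels $K^b(\cdot,z)$ identifies it pointwise with $\dfrac{k^b_\xi}{1-\bar\eta}$, whence $k^b_\xi\in H(b)$, i.e.\ $(ii)$. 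If $\eta=1$ one first replaces $b$ by $\lambda b$, $\lambda\in\mathbb{T}$, so that $1-b(w)$ is bounded away from $0$ near $\xi$; both $H(b)$ and the Clark measure transform predictably under such a rotation.

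For $(ii)\Rightarrow(iii)$: given $k^b_\xi\in H(b)$, I would show that $\dfrac{K^b(\cdot,z)}{1-\overline{b(z)}}$ converges in the norm of $H(b)$ to $\dfrac{k^b_\xi}{1-\bar\eta}$ as $z\to\xi$ inside $\Gamma(\xi,\alpha)$: weak convergence is pointwise convergence plus norm-boundedness, while the norms converge because $\dfrac{K^b(z,z)}{|1-b(z)|^2}\to\Big\|\dfrac{k^b_\xi}{1-\bar\eta}\Big\|_b^{2}$ along the admissible approach, this last limit being exactly the angular-derivative datum; from the reproducing identity $f(z)=\langle f,K^b(z,\cdot)\rangle_b$ it then follows that every $f\in H(b)$ has admissible limit $f^*(\xi)=\langle f,k^b_\xi\rangle_b$ at $\xi$, and Theorem~2 of Section~3 (i.e.\ \cite{Cow}, 2.79) promotes this to a genuine restricted limit. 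Finally $(iii)\Rightarrow(i)$ is a uniform-boundedness argument: if every $f\in H(b)$ has an admissible limit at $\xi$, then $\sup\{|f(z)|:z\in\Gamma(\xi,\alpha),\ \|z-\xi\|<\delta\}<\infty$ for each such $f$, hence $\sup K^b(z,z)<\infty$ over that truncated region; specializing to the radius and invoking Julia's lemma to pass from the liminf to the restricted limit shows that $c$ is finite. I expect the crux of the whole argument --- and the one place where the several-variable geometry genuinely departs from the disc --- to be precisely the boundedness and convergence of $K^b(z,z)=\frac{1-|b(z)|^2}{(1-\|z\|^2)^n}$ along $\Gamma(\xi,\alpha)$: one must really exploit the admissibility of the region, so that $|1-\langle z,\xi\rangle|\asymp 1-\|z\|^2$ and the Julia--Carath\'eodory bound on $\frac{1-\bar\eta b(z)}{1-\langle z,\xi\rangle}$ both come into play, rather than mere nontangential approach, and setting up this estimate carefully is what makes the rest of the circle run.
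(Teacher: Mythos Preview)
Your circle of implications matches the paper's, and $(i)\Leftrightarrow(iv)$ and $(iii)\Rightarrow(i)$ are handled essentially the same way (Julia--Carath\'eodory for the former, uniform boundedness for the latter). The substantive differences are in $(i)\Rightarrow(ii)$ and $(ii)\Rightarrow(iii)$.

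For $(i)\Rightarrow(ii)$ the paper takes a shorter road than you do: it picks a sequence $z_n\to\xi$ realizing the $\liminf$, notes that $\|K^b(\cdot,z_n)\|_b^2=K^b(z_n,z_n)$ is bounded by $(i)$, extracts a weak limit $k\in H(b)$ directly, and identifies $k$ pointwise as $\frac{1-\bar\eta b}{(1-\langle\cdot,\xi\rangle)^n}$ by pairing against kernels. No Clark measures, no appeal to $(iv)$, no $\eta=1$ case split. Your route through $V_b$ and the $L^2(\mu)$ norm of $K(\cdot,w)$ is correct but imports more machinery than the step needs.

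For $(ii)\Rightarrow(iii)$ there is a soft circularity in your plan. You aim for \emph{norm} convergence of $\frac{K^b(\cdot,z)}{1-\overline{b(z)}}$ and justify convergence of the norms by saying the limit is ``exactly the angular-derivative datum''; but that datum is $(iv)$, which in your scheme has been tied only to $(i)$, not yet to $(ii)$. The paper sidesteps this by using only \emph{weak} convergence of $K^b(\cdot,z)$ itself: starting from $(ii)$ it first shows $b(z)\to\eta$ admissibly via the estimate $|b(z)-\eta|\le\|k\|_2\,\|K^b(\cdot,z)\|_2\,|1-\langle z,\xi\rangle|^n$, then bounds $\|K^b(\cdot,z)\|_b$ on $\Gamma(\xi,\alpha)$ by sandwiching $|K^b(z,\xi)|$ between $\|K^b(\cdot,\xi)\|_b\|K^b(\cdot,z)\|_b$ from above and $\frac{(1-\|z\|^2)^n\|K^b(\cdot,z)\|_b^2}{(1+|b(z)|)|1-\langle z,\xi\rangle|^n}$ from below, and finally checks pointwise convergence $K^b(u,z)\to K^b(u,\xi)$. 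Since $f(z)=\langle f,K^b(\cdot,z)\rangle_b$, weak convergence is precisely what delivers $f(z)\to\langle f,K^b(\cdot,\xi)\rangle_b$; norm convergence is neither needed nor proved. Replacing your norm-convergence step with this weak-convergence argument removes the dependence on $(iv)$ and closes the circle cleanly.
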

\begin{proof}

($i\Rightarrow ii$) If $c<\infty$ then there is a sequence $(z_n)\in \mathbb{B}^n$ converging to $\xi$ such that
$$
c=\lim_{n\rightarrow\infty} \frac{1-|b(z_n)|}{1-\|z_n\|}<\infty
$$
so we have $\lim_{n\rightarrow\infty}|b(z_n)|=1$ and we can write
$$
c=\lim_{n\rightarrow\infty}\frac{1-|b(z_n)|^2}{1-\|z_n\|^2}
$$
which actually is
$$
c=\lim_{n\rightarrow\infty}\|K^b(z,z_n)\|^{2}_{b}.
$$
Then $(K^b(z,z_n))$ has a weakly convergent subsequence in $H(b)$. Since $(b(z_n))_{n\geq 1}$ is bounded, it also has a convergent subsequence in $\overline{\mathbb{B}^n}$ hence we may assume there are $\eta\in \overline{\mathbb{D}}$ and $k\in H(b)$ such that $b(z_n)\rightarrow\eta$ and $K^b(z,z_n)\xrightarrow{w}k$.
For each $z\in \mathbb{B}^n$,
$$
k(z)=\langle k,K^b(z,z)\rangle_b=\lim_{n\rightarrow\infty}\langle K^b(z,z_n),K^b(z,z)\rangle_b=\lim_{n\rightarrow\infty}K^b(z,z_n)
$$
$$
=\lim_{n\rightarrow\infty}\frac{1-\overline{b(z_n)}b(z)}{(1-\langle z_n,z\rangle)^n}=\frac{1-\overline{\eta}b(z)}{(1-\langle \xi,z\rangle)^n}.
$$

Since $k\in H^2(\mathbb{B}^n)$ and $1/(1-\langle \xi,z\rangle)^n$ we have $|\eta|=1$ and
$$
\eta k(z)=\frac{\eta-b(z)}{(1-\langle \xi,z\rangle)^n}\in H(b).
$$
Now since $k\not\equiv0$, $K^b(z,z_n)\xrightarrow{w}k$ implies
$$
0<\|k\|^{2}_{b}\leq\liminf_{n\rightarrow\infty}\|K^b(z,z_n)\|_{b}^{2}=c.
$$

($ii\Rightarrow iii$)
Assume that $k\in H(b)$ then since $k\in H^2(\mathbb{B}^n)$
$$
b(z)=\eta-\eta k(z)(1-\langle \xi,z\rangle)^n
$$
implies that
\begin{eqnarray}
|b(z)-\eta|&\leq&\|k\|_2\|K^b(z,z)\|_2|1-\langle \xi,z\rangle|^n\\
&=&\|k\|_2\left(\frac{1-|b(z)|^2}{(1-\|z\|^2)^n}\right)^{1/2}|1-\langle \xi,z\rangle|^n
\end{eqnarray}
thus if $z\in\Gamma(\xi,\alpha)$ for some $\alpha>1$ then,
\begin{eqnarray}
|b(z)-\eta|&\leq&\|k\|_2\left(\frac{1-|b(z)|^2}{(1-\|z\|^2)^n}\right)^{1/2}\frac{\alpha}{2}(1-\|z\|^2)^n\\
&=&\|k\|_2\frac{\alpha}{2}(1-|b(z)|^2)^{1/2}(1-\|z\|^2)^{n/2}
\end{eqnarray}
and as $z\rightarrow\xi$ from $\Gamma(\xi,\alpha)$ the right hand side tends to $0$ so $\lim_{\substack{z\rightarrow\xi\\\Gamma(\xi,\alpha)}}b(z)=\eta$.

Now denoting $\eta$ as $\beta(\xi)$ write
$$
K^b(z,\xi)=\frac{1-\overline{b(\xi)}b(z)}{(1-\langle \xi,z\rangle)^n}
$$
so
$$
K^b(z,\xi)=\langle K^b(z,\xi),K^b(z,z)\rangle_b
$$
and
\begin{equation}\label{equationstar}
|K^b(z,\xi)|\leq\|K^b(.,\xi)\|_b\|K^b(.,z)\|_b
\end{equation}
Also,
\begin{equation}\label{equationdoublestar}
|K^b(z,\xi)|=\frac{|1-\overline{b(\xi)}b(z)|}{|1-\langle z,\xi\rangle|^n}\geq\frac{1-|b(z)|}{|1-\langle z,\xi\rangle|^n}=\frac{(1-\|z\|^2)^n\|K^b(.,z)\|_{b}^{2}}{(1+|b(z)|)|1-\langle z,\xi\rangle|^n}
\end{equation}
Combining (\ref{equationstar}) and (\ref{equationdoublestar}) we get
$$
\|K^b(.,z)\|_b\leq\|K^b(.,\xi)\|_b\frac{(1-|b(z)|)}{(1-\|z\|^2)^n}|1-\langle z,\xi\rangle|^n
$$
so in an admissible approach region $\Gamma(\xi,\alpha)$
$$
\|K^b(.,z)\|_b\leq A\|K^b(.,\xi)\|_b~~(z\in \Gamma(\xi,\alpha)).
$$
Now for $u\in\mathbb{B}^n$,
$$
\lim_{\substack{z\rightarrow\xi\\\Gamma(\xi,\alpha)}}K^b(u,z)=\lim_{\substack{z\rightarrow\xi\\\Gamma(\xi,\alpha)}}\frac{1-\overline{b(z)}b(u)}{(1-\langle u,z\rangle)^n}=\frac{1-\overline{b(\xi)}b(u)}{(1-\langle u,\xi\rangle)^n}=K^b(u,\xi)
$$
which is
$$
\lim_{\substack{z\rightarrow\xi\\\Gamma(\xi,\alpha)}}\langle K^b(.,z),K^b(.,u)\rangle_b=\langle K^b(.,\xi),K^b(.,u)\rangle_b.
$$
Hence,
\begin{equation}\label{equationtriplestar}
\lim_{\substack{z\rightarrow\xi\\\Gamma(\xi,\alpha)}}\langle f,K^b(.,z)\rangle_b=\langle f,K^b(.,\xi)\rangle_b
\end{equation}
where $f\in H(b)$ is any element of the form $f=\alpha_1K^b(.,w_1)+\dots+\alpha_nK^b(.,w_n)$ and since the elements of this sort are dense in $H(b)$ (\ref{equationtriplestar}) holds for all $f\in H(b)$ and
$$
f(\xi)=\lim_{z\rightarrow\xi}f(z)=\langle f,K^b(.,\xi)\rangle_b,~~f\in H(b).
$$

($iii\Rightarrow i$) Take an admissible approach region $\Gamma(\xi,\alpha)$ then we have
$$
\sup_{z\in \Gamma(\xi,\alpha)}|\langle f,K^b(.,z)\rangle|=C(f)<\infty
$$
so by Uniform Boundedness principle
$$
\widetilde{C}=\sup_{z\in \Gamma(\xi,\alpha)}\|K^b(.,z)\|_b<\infty.
$$
Consider $z_n=(1-1/n\|)\xi$, $n\geq 1$, then $z_n\in \Gamma(\xi,\alpha)$ for big $n$ and we get
$$
\frac{1-|b(z_n)|^2}{1-\|z_n\|^2}=\|K^b(.,z_n)\|_{b}^{2}\leq \widetilde{C}^2
$$
which also gives $\lim_{n\rightarrow\infty}|b(z_n)|=1$ and
$$
C\leq \liminf_{n\rightarrow\infty}\frac{1-|b(z_n)|^2}{1-\|z_n\|^2}=\liminf_{n\rightarrow\infty}\|K^b(.,z_n)\|_{b}^{2}\leq\widetilde{C}^2.
$$

($i\Rightarrow iv$) This part directly follows from the following theorem by \cite{Rud},
\begin{theorem}
Let $f:\mathbb{B}^n\rightarrow\mathbb{B}^m$ be a holomorphic map such that
$$
\liminf_{z\rightarrow p}\displaystyle{\frac{1-\|f(z)\|}{1-\|z\|}=\alpha}<\infty
$$
for some $p\in S^n$. Then $f$ admits admissible limit $q\in S^m$ at $p$ and furthermore for all $z\in \mathbb{B}^n$,
$$
\frac{|1-\langle f(z),q\rangle|^2}{|1-\langle z,p\rangle|^2}\leq \alpha\frac{1-\|f(z)\|^2}{1-\|z\|^2}.
$$
\end{theorem}

($iv\Rightarrow i$) If $b$ has finite angular derivative in the sense of Carath\'{e}odory at $\xi$ then
$$
\frac{1-|b(r\xi)|}{1-r}\leq\frac{|b(r\xi)-b(\xi)|}{|\langle r\xi-\xi,\xi\rangle|}
$$
and
$$
c=\liminf_{z\rightarrow\xi}\frac{1-|b(z)|}{1-\|z\|}\leq\lim_{r\rightarrow1}\frac{|b(r\xi)-b(\xi)|}{|\langle r\xi-\xi,\xi\rangle|}=|b'(\xi)|<\infty.
$$

\end{proof}

\begin{cor}
If the composition operator with symbol $\varphi$, $C_\varphi$, is compact on $H(b)$ and $\displaystyle{\frac{1-|b(\varphi(w_\ell))|^2}{1-|b(w_\ell)|^2}}$ is uniformly bounded for any sequence $w_\ell\rightarrow S^n$ then $\varphi$ cannot have finite angular derivative at any point of $S^n$.
\end{cor}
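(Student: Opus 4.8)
The plan is to argue by contradiction, using the reproducing kernels of $H(b)$ together with the theorem of Rudin quoted in the proof of the implication $(i)\Rightarrow(iv)$ of Theorem \ref{maintheorem2}. Suppose that $C_\varphi$ is compact on $H(b)$, that the quantities $\frac{1-|b(\varphi(w_\ell))|^2}{1-|b(w_\ell)|^2}$ stay between two fixed positive constants for every sequence $w_\ell\to S^n$, but that $\varphi$ has finite angular derivative at some $\xi\in S^n$. Then by Rudin's theorem $\varphi$ has an admissible limit $\eta\in S^n$ at $\xi$ and the dilatation $\alpha=\liminf_{z\to\xi}\frac{1-\|\varphi(z)\|}{1-\|z\|}$ is finite; moreover $\alpha>0$, for $\alpha=0$ would force $\varphi\equiv\eta$. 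Put $w_\ell=(1-1/\ell)\xi$, so $w_\ell\to\xi$, $\varphi(w_\ell)\to\eta$, and $|1-\langle w_\ell,\xi\rangle|=1-\|w_\ell\|$. Feeding this into the inequality $\frac{|1-\langle\varphi(z),\eta\rangle|^2}{|1-\langle z,\xi\rangle|^2}\le\alpha\frac{1-\|\varphi(z)\|^2}{1-\|z\|^2}$ and using $|1-\langle\varphi(w_\ell),\eta\rangle|\ge 1-\|\varphi(w_\ell)\|$ shows that $\frac{1-\|\varphi(w_\ell)\|^2}{1-\|w_\ell\|^2}$ stays bounded; hence $\left(\frac{1-\|w_\ell\|^2}{1-\|\varphi(w_\ell)\|^2}\right)^n$ is bounded below by a positive constant.

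Next I would invoke the identity $\|K^b(\cdot,w)\|_b^2=K^b(w,w)=\frac{1-|b(w)|^2}{(1-\|w\|^2)^n}$ recorded after Theorem \ref{reproducingkernel}. Since $b$ is non-constant, the same theorem of Rudin applied to $b$ gives $\liminf_{z\to\xi}\frac{1-|b(z)|}{1-\|z\|}>0$ (a value of $0$ would make $b$ constant), so $1-|b(w_\ell)|^2\ge c(1-\|w_\ell\|^2)$ for some $c>0$ and all large $\ell$, and therefore, as $n\ge 2$, $\|K^b(\cdot,w_\ell)\|_b^2\ge c(1-\|w_\ell\|^2)^{1-n}\to\infty$. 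Hence the normalised kernels $\hat{k}_\ell:=K^b(\cdot,w_\ell)/\|K^b(\cdot,w_\ell)\|_b$ have norm one, and since $\langle f,\hat{k}_\ell\rangle_b=f(w_\ell)/\|K^b(\cdot,w_\ell)\|_b\to 0$ for every finite linear combination $f$ of kernels $K^b(\cdot,u)$ — such combinations being dense in $H(b)$, as used in the proof of Theorem \ref{maintheorem2} — we obtain $\hat{k}_\ell\to 0$ weakly in $H(b)$.

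The contradiction then comes from compactness. The relation $(C_\varphi f)(w)=f(\varphi(w))=\langle f,K^b(\cdot,\varphi(w))\rangle_b$, valid for all $f\in H(b)$ and $w\in\mathbb{B}^n$, shows that $C_\varphi^*K^b(\cdot,w)=K^b(\cdot,\varphi(w))$. As $C_\varphi$ is compact, so is $C_\varphi^*$, and a compact operator carries weakly null sequences to norm null ones, so $\|C_\varphi^*\hat{k}_\ell\|_b\to 0$. On the other hand,
$$
\|C_\varphi^*\hat{k}_\ell\|_b^2=\frac{\|K^b(\cdot,\varphi(w_\ell))\|_b^2}{\|K^b(\cdot,w_\ell)\|_b^2}=\frac{1-|b(\varphi(w_\ell))|^2}{1-|b(w_\ell)|^2}\left(\frac{1-\|w_\ell\|^2}{1-\|\varphi(w_\ell)\|^2}\right)^n,
$$
and by the first two paragraphs both factors on the right are bounded below by positive constants, so $\liminf_\ell\|C_\varphi^*\hat{k}_\ell\|_b>0$. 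This contradiction shows that $\varphi$ cannot have a finite angular derivative at any point of $S^n$.

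The step I expect to require the most care is manufacturing a genuinely weakly null test sequence: one must be sure that $\|K^b(\cdot,w_\ell)\|_b\to\infty$, which is exactly where $n\ge 2$ and the positivity of the Julia quotient of $b$ enter, and the borderline situation in which $b$ itself has a finite angular derivative at $\xi$ (possible only when $n=1$, where $\|K^b(\cdot,w_\ell)\|_b$ may stay bounded) would need separate, more delicate attention. One must also use the hypothesis on $\frac{1-|b(\varphi(w_\ell))|^2}{1-|b(w_\ell)|^2}$ in the form of a \emph{lower} bound, so that ``uniformly bounded'' here is to be read as ``bounded between positive constants''. The remaining ingredients — the adjoint formula on kernel functions, the weak-to-norm continuity of compact operators, and the density of kernel combinations — are routine given the results already established.
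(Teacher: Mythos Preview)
Your approach is essentially the paper's: normalise the reproducing kernels $K^b(\cdot,w_\ell)$, argue they tend weakly to zero, and use compactness to force $\|K^b(\cdot,\varphi(w_\ell))\|_b/\|K^b(\cdot,w_\ell)\|_b\to 0$, which is incompatible with a finite angular derivative. The paper's version is in fact terser and less careful than yours on two points you rightly flag. First, the paper writes $\|C_\varphi(k_j)\|=\|K^b(\cdot,\varphi(w_j))\|/\|K^b(\cdot,w_j)\|$, which is really the adjoint identity $C_\varphi^*K^b(\cdot,w)=K^b(\cdot,\varphi(w))$ that you state and use correctly. Second, the paper simply asserts that the normalised kernels tend to zero, without your analysis of $\|K^b(\cdot,w_\ell)\|_b\to\infty$ via the Julia quotient of $b$; your caveat about $n=1$ is a genuine subtlety that the paper glosses over. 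Your reading of ``uniformly bounded'' as furnishing a \emph{lower} bound is also exactly how the paper (implicitly) uses the hypothesis in its final line.
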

\begin{proof}
Let $K^b(.,w)$ be the kernel function for $H(b)$. Since
$K^b(z,w)=\displaystyle{\frac{1-\overline{b(w)}b(z)}{(1-\langle z,w\rangle)^n}}$ and $\|K^b(.,w)\|^2=\displaystyle{\frac{1-|b(w)|^2}{(1-\|w\|^2)^n}}$ if we take a sequence $(w_\ell)\rightarrow S^n$ then $k_j=\frac{K^b(.,w_j)}{\|K^b(.,w_j)\|}\rightarrow 0$ as $j\rightarrow\infty$. If $T$ is a compact operator on $H(b)$ then $T(k_j)\rightarrow 0$ and $\|C_\varphi\|_e=\inf\{\|C_\varphi-P\|,~~P~~\text{is compact}\}$ so
$$
\|C_\varphi-T\|\geq \limsup_{j\rightarrow\infty}\|(C_\varphi-T)(k_j)\|=\limsup_{j\rightarrow\infty}\|C_\varphi(k_j)\|=\limsup_{j\rightarrow\infty}\frac{\|K^b(.,\varphi(w_j))\|}{\|K^b(.,w_j)\|}.
$$
Hence we have,
$$
\|C_\varphi\|_e\geq\limsup_{\|w\|\rightarrow1}\frac{\|K^b(.,\varphi(w))\|}{\|K^b(.,w)\|}
$$
which gives that if $C_\varphi$ is compact then $\left(\frac{1-\|w\|^2}{1-\|\varphi(w)\|^2}\right)^n\rightarrow0$ for any sequence approaching to $S^n$ hence $\varphi$ cannot have finite angular derivative at any point in $S^n$ by the previous theorem.
\end{proof}

We will finish this section with the relation between Clark measures and finite angular derivatives:

\begin{prop}
Let $b$ be a non-constant function in the closed unit ball of $H^\infty(\mathbb{B}^n)$ and let $\Theta$ be a non-constant inner function. Let $\mu$ and $\nu$ denote respectively the Clark measures of $b$ and $\Theta$. Then the following are equivalent:
\begin{itemize}
\item[(i)] $\nu$ is absolutely continuous with respect to $\mu$ and $\frac{d\nu}{d\mu}\in L^2(\mu)$
\item[(ii)] $\frac{1-b}{1-\Theta}K^\Theta(z,0)\in H(b)$.
\end{itemize}
\end{prop}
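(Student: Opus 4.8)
The plan is to rephrase (ii) as a statement about the Cauchy transforms of $\mu$ and $\nu$ and then push everything through the partial isometry $V_b$ of Theorem~\ref{maintheorem1}. Since $b$ and $\Theta$ are non-constant with $\|b\|_\infty,\|\Theta\|_\infty\le1$, the maximum principle gives $|b(z)|<1$ and $|\Theta(z)|<1$ on $\mathbb B^n$, so $1-b$ and $1-\Theta$ are zero-free there and, in particular, $\overline{\Theta(0)}\neq1$; thus every quotient below is a genuine holomorphic function on $\mathbb B^n$. Writing $\mathbf 1$ for the constant function $1=K(\cdot,0)$ and applying Theorem~\ref{maintheorem1} to $\Theta$ (with $V_\Theta$ the corresponding operator and $\nu$ its Clark measure) at $w=0$ gives $(1-\Theta(z))K_\nu\mathbf 1(z)=V_\Theta(\mathbf 1)(z)=\dfrac{K^\Theta(z,0)}{1-\overline{\Theta(0)}}$, whence $\dfrac{K^\Theta(\cdot,0)}{1-\Theta}=(1-\overline{\Theta(0)})K_\nu\mathbf 1$ and therefore
\[
\frac{1-b}{1-\Theta}\,K^\Theta(\cdot,0)=(1-\overline{\Theta(0)})\,(1-b)\,K_\nu\mathbf 1 .
\]
Hence (ii) is equivalent to $(1-b)K_\nu\mathbf 1\in H(b)$. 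This makes $(i)\Rightarrow(ii)$ immediate: if $h:=d\nu/d\mu\in L^2(\mu)$, then $K_\nu\mathbf 1(z)=\int_{S^n}(1-\langle z,\xi\rangle)^{-n}h(\xi)\,d\mu(\xi)=K_\mu h(z)$, so $(1-b)K_\nu\mathbf 1=V_bh$, which lies in $H(b)$ because $V_b$ maps $L^2(\mu)$ onto $H(b)$ by Theorem~\ref{maintheorem1}.

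For $(ii)\Rightarrow(i)$: by Theorem~\ref{maintheorem1} the restriction of $V_b$ to $H^2(\mu)$ is unitary onto $H(b)$, so there is a unique $g\in H^2(\mu)$ with $V_bg=(1-\overline{\Theta(0)})(1-b)K_\nu\mathbf 1$. Comparing with $V_bg=(1-b)K_\mu g$ and cancelling the zero-free factor $1-b$ gives $K_\mu g=(1-\overline{\Theta(0)})K_\nu\mathbf 1$ on $\mathbb B^n$; equivalently, the finite complex measure $\tau:=g\,d\mu-(1-\overline{\Theta(0)})\nu$ has identically vanishing Cauchy transform, $\int_{S^n}(1-\langle z,\xi\rangle)^{-n}\,d\tau(\xi)=0$ for all $z\in\mathbb B^n$. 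If we know $\tau=0$, then $\nu=(1-\overline{\Theta(0)})^{-1}g\,d\mu$, so $\nu\ll\mu$ with $d\nu/d\mu=(1-\overline{\Theta(0)})^{-1}g\in H^2(\mu)\subseteq L^2(\mu)$, which is exactly (i). (Positivity of $\nu$ then forces $(1-\overline{\Theta(0)})^{-1}g\ge0$ $\mu$-a.e., but this is not needed.)

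The real work, and where I expect the main difficulty, is upgrading $K\tau\equiv0$ to $\tau=0$: on $\mathbb B^n$ with $n\ge2$ the Cauchy transform is not injective on finite — even positive — measures, so the structure of $\tau$ must be used, not just the vanishing of its Cauchy transform. My plan is to exploit that $\Theta$ is inner, hence $\nu$ is singular with respect to $\sigma$, and to invoke the Poltoratski-type theorems on admissible boundary limits of (normalized) Cauchy transforms on the sphere from \cite{AlexDubs}: the normalized Cauchy transform of $g\,d\mu$ reproduces $g$ $\mu$-a.e.\ and that of $(1-\overline{\Theta(0)})\nu$ reproduces the constant $1-\overline{\Theta(0)}$ $\nu$-a.e.; splitting $\mu$ into its $\sigma$-absolutely continuous and $\sigma$-singular parts and matching these limits on the respective carriers should force the absolutely continuous part of $g\,d\mu$ to vanish and then $g\,d\mu_s=(1-\overline{\Theta(0)})\nu$, i.e.\ $\tau=0$. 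Apart from this boundary-value analysis — the careful bookkeeping of the a.c./singular decomposition together with the precise normalizations in the cited limit theorems — everything reduces to formal manipulations with $V_b$, $V_\Theta$ and the reproducing kernels already computed above.
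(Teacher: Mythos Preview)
Your plan is exactly what the paper does: it merely cites Theorem~20.28 of \cite{Fri2} together with Theorem~\ref{maintheorem1}, and your reduction via $V_b$ and $V_\Theta$ to the Cauchy-transform identity $K_\mu g=(1-\overline{\Theta(0)})K_\nu\mathbf 1$, followed by the Poltoratski-type boundary-value theory of \cite{AlexDubs}, is precisely that argument spelled out for the ball. One small slip in your sketch: Poltoratski reproduces $g$ only $\mu_s$-a.e., not $\mu$-a.e.; once you make that adjustment (using that $\nu$ is singular because $\Theta$ is inner, and that $\ker K_\mu|_{L^2(\mu)}=(H^2(\mu))^\perp$ is carried by $\mu_a$), the matching-of-carriers argument and the conclusion $\tau=0$ go through.
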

\begin{proof}
The result follows the same lines of the unit disc case (Theorem 20.28, \cite{Fri2}) combined with (\ref{maintheorem1}) so we will not repeat the same argument here.
\end{proof}
\begin{prop}
Let $b$ be a non-constant function in the closed unit ball of $H^\infty(\mathbb{B}^n)$ and $\mu_\eta$ be the corresponding Clark measure for some $\eta\in \mathbb{T}$. Let $\xi_0\in S^n$ then
$$
\frac{\eta-b(z)}{(1-\langle z,\xi_0\rangle)^n}\in H(b)
$$
if and only if $\mu_\eta(\{\xi_0\})>0$.
\end{prop}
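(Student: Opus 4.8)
The plan is to move between the two conditions by using the integral model $V_b$ of Theorem~\ref{maintheorem1} for the implication that produces the function, and a reproducing-kernel size estimate together with an atomic-mass formula for Clark measures for the converse. Throughout put $h(z)=\frac{\eta-b(z)}{(1-\langle z,\xi_0\rangle)^n}$ and $c_0=\mu_\eta(\{\xi_0\})$.

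The first ingredient is the \emph{atomic mass formula}
$$
c_0=2^{-n}\lim_{r\to1^-}(1-r)^n\,\frac{1-|b(r\xi_0)|^2}{|\eta-b(r\xi_0)|^2},
$$
valid for every $\xi_0\in S^n$. It follows by evaluating the defining identity $\int_{S^n}P(z,\xi)\,d\mu_\eta(\xi)=\frac{1-|b(z)|^2}{|\eta-b(z)|^2}$ at $z=r\xi_0$, multiplying by $(1-r)^n$, and letting $r\to1$. The delicate point is the passage to the limit under the integral: from $P(r\xi_0,\xi)=\bigl(\tfrac{1-r^2}{|1-r\langle\xi_0,\xi\rangle|^2}\bigr)^n$ and $|1-r\langle\xi_0,\xi\rangle|\ge 1-r$ one obtains the uniform bound $(1-r)^nP(r\xi_0,\xi)\le 2^n$, while pointwise $(1-r)^nP(r\xi_0,\xi)\to 2^n$ at $\xi=\xi_0$ and $\to 0$ at every $\xi\neq\xi_0$, so dominated convergence yields $2^nc_0$ on the right. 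I expect this dominated-convergence step to be the main (if routine) hurdle, as it is the only genuine bridge between the function-theoretic side and the measure side.

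For $c_0>0\Rightarrow h\in H(b)$ I would rotate so as to reach the $\alpha=1$ normalization of Theorem~\ref{maintheorem1}. Put $b_1=\overline{\eta}\,b$; since $|\eta|=1$ we have $\|b_1\|_\infty\le 1$ and $\overline{b_1(z)}b_1(w)=\overline{b(z)}b(w)$, hence $K^{b_1}(z,w)=(1-\overline{b_1(z)}b_1(w))K(z,w)=K^b(z,w)$ and $H(b_1)=H(b)$ with the same norm; moreover the $\alpha=1$ Clark measure of $b_1$ is exactly $\mu_\eta$, because $Re\left(\frac{1+b_1}{1-b_1}\right)=\frac{1-|b_1|^2}{|1-b_1|^2}=\frac{1-|b|^2}{|\eta-b|^2}$. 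Applying the operator $V_{b_1}\colon L^2(\mu_\eta)\to H(b_1)=H(b)$, $f\mapsto(1-b_1)K_{\mu_\eta}f$, of Theorem~\ref{maintheorem1} to $\chi_{\{\xi_0\}}\in L^2(\mu_\eta)$ and using (\ref{clarkintegralrep}) gives
$$
V_{b_1}\chi_{\{\xi_0\}}(z)=(1-\overline{\eta}\,b(z))\int_{S^n}\frac{\chi_{\{\xi_0\}}(\xi)}{(1-\langle z,\xi\rangle)^n}\,d\mu_\eta(\xi)=c_0\,\overline{\eta}\,h(z)\in H(b),
$$
so $h\in H(b)$ since $c_0\overline{\eta}\neq 0$. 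That $\chi_{\{\xi_0\}}$ need not a priori lie in the generalized Hardy space $H^2(\mu_\eta)$ is immaterial: $V_{b_1}$ is defined and $H(b)$-valued on all of $L^2(\mu_\eta)$, and the displayed identity already exhibits its image as a nonzero multiple of $h$.

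Conversely, assume $h\in H(b)$. The reproducing kernel of $H(b)$ gives $h(z)=\langle h,K^b(\cdot,z)\rangle_b$ and $\|K^b(\cdot,z)\|_b^2=\frac{1-|b(z)|^2}{(1-\|z\|^2)^n}$, so $|h(z)|\le\|h\|_b\bigl(\frac{1-|b(z)|^2}{(1-\|z\|^2)^n}\bigr)^{1/2}$ by Cauchy--Schwarz --- note that it is the $H(b)$ kernel, not the $H^2$ kernel, that supplies the crucial factor $1-|b(z)|^2$. Taking $z=r\xi_0$ (where $b(r\xi_0)\neq\eta$, since $|b(r\xi_0)|<1$ by the maximum principle for the non-constant $b$), squaring, and rearranging gives
$$
(1-r)^n\,\frac{1-|b(r\xi_0)|^2}{|\eta-b(r\xi_0)|^2}\ge\frac{(1+r)^n}{\|h\|_b^2},
$$
and letting $r\to1$ and invoking the atomic mass formula yields $c_0\ge\|h\|_b^{-2}>0$, completing the proof. (This also gives $\|h\|_b^2\ge\mu_\eta(\{\xi_0\})^{-1}$; combined with $\|V_{b_1}\chi_{\{\xi_0\}}\|_b\le\|\chi_{\{\xi_0\}}\|_{L^2(\mu_\eta)}=c_0^{1/2}$ from the first part one even obtains equality, but this refinement is not needed.)
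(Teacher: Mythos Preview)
Your argument is correct and genuinely different from the paper's. The paper proceeds by specializing the preceding proposition (the Fricain--Mashreghi criterion linking $\frac{1-b}{1-\Theta}K^\Theta(\cdot,0)\in H(b)$ to absolute continuity of the Clark measure of an inner $\Theta$ with respect to that of $b$) to a function $\Theta$ whose Clark measure is the Dirac mass $\delta_{\xi_0}$, and then performs the rotation $b\mapsto\overline{\eta}b$. You bypass that machinery entirely: for $\mu_\eta(\{\xi_0\})>0\Rightarrow h\in H(b)$ you apply the partial isometry $V_{b_1}$ of Theorem~\ref{maintheorem1} directly to $\chi_{\{\xi_0\}}\in L^2(\mu_\eta)$, and for the converse you combine the reproducing-kernel Cauchy--Schwarz bound $|h(r\xi_0)|\le\|h\|_b\,\|K^b(\cdot,r\xi_0)\|_b$ with an atomic-mass formula obtained by dominated convergence from the Poisson representation of $\mu_\eta$. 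The paper's route is more structural, subsuming the result under a general absolute-continuity principle; your route is more self-contained, uses only Theorem~\ref{maintheorem1} and the kernel norm, and delivers the extra quantitative information $\|h\|_b^2=\mu_\eta(\{\xi_0\})^{-1}$. Both the rotation $b_1=\overline{\eta}b$ and the dominated-convergence justification you flag are unproblematic, so the proposal stands as written.
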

\begin{proof}
First of all by purely measure theoretic reasons one can see that $\mu_\eta(\{\xi_0\})>0$ if and only if $\delta_{\xi_{0}}$ is absolutely continuous with respect to $\mu_\eta$ and $\frac{d\delta_{\xi_{0}}}{d\mu_\eta}\in L^2(\mu_\eta)$ where $\delta_{\xi_{0}}$ is the Dirac measure associated with $\xi_{0}$. Since $\delta_{\xi_{0}}$ on $S^n$ is the Clark measure of $\Theta(z)=1-(1-\langle z,\xi_0\rangle)^n$ by the previous proposition we have that
$$
\frac{1-b(z)}{1-\Theta(z)}K^\Theta(z,0)\in H(b)
$$
i.e
$$
\frac{1-b(z)}{(1-\langle z,\xi_0\rangle)^n}(1-\overline{\Theta(0)}\Theta(z))\in H(b)
$$
which in turn gives that $\displaystyle{\frac{1-\overline{\eta}b(z)}{(1-\langle z,\xi_0\rangle)^n}}\in H(\overline{\eta} b)$ and this is equivalent to the condition that
$$
\frac{\eta-b(z)}{(1-\langle z,\xi_0\rangle)^n}\in H(b).
$$

\end{proof}
\begin{cor}
The function $b$ has finite angular derivative in the sense of Carath\'{e}odory at the point $\xi_0\in S^n$ if and only if there exists $\eta\in \mathbb{T}$ such that $\mu_\eta(\{\xi_0\})>0$.
\end{cor}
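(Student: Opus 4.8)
The plan is to obtain the corollary with essentially no new work, by chaining together Theorem \ref{maintheorem2} and the Proposition immediately preceding the corollary. First I would recall the equivalence $(ii)\Leftrightarrow(iv)$ in Theorem \ref{maintheorem2}, applied at the point $\xi_0$: the statement ``$b$ has angular derivative in the sense of Carath\'eodory at $\xi_0$'' is equivalent to the existence of some $\eta\in\mathbb{T}$ with $\dfrac{\eta-b(z)}{(1-\langle z,\xi_0\rangle)^n}\in H(b)$. Then I would invoke the preceding Proposition, which asserts precisely that for a \emph{fixed} $\eta\in\mathbb{T}$ one has $\dfrac{\eta-b(z)}{(1-\langle z,\xi_0\rangle)^n}\in H(b)$ if and only if $\mu_\eta(\{\xi_0\})>0$. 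Combining the two equivalences, while keeping the existential quantifier over $\eta$ on every side, yields: $b$ has finite angular derivative in the sense of Carath\'eodory at $\xi_0$ $\iff$ there is $\eta\in\mathbb{T}$ with $\dfrac{\eta-b(z)}{(1-\langle z,\xi_0\rangle)^n}\in H(b)$ $\iff$ there is $\eta\in\mathbb{T}$ with $\mu_\eta(\{\xi_0\})>0$, which is exactly the assertion.

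The only point deserving a word of care is the bookkeeping of hypotheses and quantifiers. Theorem \ref{maintheorem2} is stated for $b$ holomorphic in $\mathbb{B}^n$ (and, by the standing conventions of the paper, in the closed unit ball of $H^\infty(\mathbb{B}^n)$ and non-constant), which is precisely the standing assumption of the Proposition, so both results apply to the same $b$; moreover the $\eta$ produced by $(iv)\Rightarrow(ii)$ of Theorem \ref{maintheorem2} is exactly the $\eta$ for which the Proposition is subsequently used, so no mismatch of constants can occur. It is also worth remarking that ``angular derivative in the sense of Carath\'eodory'' already packages both the finiteness $c<\infty$ and the unimodularity of the boundary value $|\eta|=1$ (the latter appearing in the proof of $(i)\Rightarrow(ii)$ of Theorem \ref{maintheorem2}), so nothing extra has to be verified on that side of the equivalence.

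Since both ingredients are already in place, there is no genuine obstacle here and the corollary is an immediate consequence. If one wished to make the argument self-contained one could instead re-derive $(ii)\Leftrightarrow(iv)$ directly from the Julia--Carath\'eodory-type estimate of Rudin quoted above together with the identity $\|K^b(.,z)\|_b^2=(1-|b(z)|^2)/(1-\|z\|^2)^n$, but in the presence of Theorem \ref{maintheorem2} and the preceding Proposition this is unnecessary, and I would simply present the two-step chain of equivalences.
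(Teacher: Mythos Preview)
Your proposal is correct and matches the paper's intended approach exactly: the corollary is stated without proof precisely because it follows by chaining the equivalence $(ii)\Leftrightarrow(iv)$ of Theorem~\ref{maintheorem2} with the preceding Proposition, just as you describe. Your attention to the existential quantifier over $\eta$ and the matching of hypotheses is appropriate and nothing further is needed.
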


\end{document}